\let\cite=\citep
\DeclareMathOperator{\mes}{mes}
\theoremstyle{plain}
\newtheorem{theorem}{Theorem}
\newtheorem{definition}{Definition}
\newtheorem{lemma}{Lemma}
\newtheoremstyle{remark}
{3pt}
{3pt}
{\rmfamily}
{\parindent}
{\bfseries}
{.}
{.5em}
{}
\theoremstyle{remark}
\begin{document}

\date{}

\author{E. Yarovaya}

\title{Operator Equations of Branching Random Walks\thanks{This work was
supported by the Russian Foundarion for Science (project no.~14-21-00162).}}

\markboth{E. Yarovaya}{Operator Equations of Branching Random Walks}

\maketitle

\begin{abstract}
Consideration is given to the continuous-time supercritical branching
random walk over a multidimensional lattice with a finite number of
particle generation sources of the same intensity both with and without
constraint on the variance of jumps of random walk underlying the process.
Asymptotic behavior of the Green function and eigenvalue of the evolution
operator of the mean number of particles under source intensity close to
the critical one was established.\medskip

\noindent\textbf{Keywords:}  branching random walks, Green function,
convolution-type operator, multipoint perturbations, positive
eigenvalues.\medskip

\noindent\textbf{AMS Subject Classification:} 60J80, 60J35, 62G32
\end{abstract}

\section{Introduction}

The continuous-time branching random walk (BRW) with finite number of the
branching sources situated at the points of the multidimensional lattice
$\mathbb{Z}^{d}$, $d\ge1$, is considered.  BRW relies on the symmetrical
space-homogeneous irreducible random walk on $\mathbb{Z}^{d}$, $d\ge1$.  In
the recent decades, a sufficiently great number of publications was devoted
to the continuous-time BRW on $\mathbb{Z}^{d}$ (see, for example,
\cite{ABY98-1:e, AB00:e,VTY03:e,VT04:e,YarBRW:e,YTV10:e}).  It seems that
\cite{Y15-DAN:e} was the first publication to establish that the number of
positive eigenvalues in the discrete spectrum of the operator describing the
evolution of the mean numbers of particles and their multiplicity depend not
only on the source intensity, but also on their spatial configuration.  These
results were stated in detail in \cite{Y16-MCAP:e}. A definition of a weakly
supercritical BRWs whose discrete spectrum has a unique positive eigenvalue
was introduced in \cite{Y15-DAN:e}.  This case is of importance because the
fact of uniqueness of the positive eigenvalue facilitates significantly
investigation of the particle fronts \cite{MolYar12-2:e}.  A condition under
which the supercritical BRW is weakly supercritical was given in
\cite{Y15-DAN:e}.

We outline the paper content.  Section~\ref{S:statement} gives the basic
definitions.  It is assumed that the intensities of the branching sources are
identical and equal to $\beta\in\mathbb{R}$.  The mean numbers of particles
at an arbitrary point of the lattice obey the differential equations
treatable as the operator equations in the Banach spaces (see, for example,
\cite{Y13-PS:e}).  Here $\beta_{c}$ denotes the minimal value of the source
intensity such that for $\beta>\beta_{c}$ the spectrum of the evolutionary
operator of mean numbers of particles $\mathscr{H}_{\beta}$ contains positive
eigenvalues implying the exponential growth in the numbers of particles both
in an arbitrary bounded domain on $\mathbb{Z}^{d}$ and over the entire
lattice $\mathbb{Z}^{d}$ (see, for example, \cite{YTV10:e,Y11-SBRW:e}).
Section~\ref{S:WSBRW} is devoted to analysis of behavior of the Green
function $G_{\lambda}$ of the random walk transition probabilities
(Theorems~\ref{As-G} and~\ref{As-G-Ht}), as well as to determination of the
asymptotic estimates of the leading eigenvalue of the operator
$\mathscr{H}_{\beta}$ under $\beta\to\beta_{c}$ in the case of an arbitrary
number of sources $N\ge1$ both under finite (Theorem~\ref{T-EV}) and infinite
variance of jumps of random walk (Theorem~\ref{As-L-N}). Notice that in the
cases (i) and (ii)  of Theorem~\ref{As-L-N} corresponding to the recurrent
random walk under infinite variance of jumps one succeeds to determine an
explicit dependence on the number of sources~$N$.  The proofs of
Theorems~\ref{As-G-Ht} and~\ref{As-L-N} are given in Sections~\ref{S:As-G-Ht}
and~\ref{S:As-L-N}, respectively.

\section{Description of the Model}\label{S:statement}

\sloppy Let $A=(a(x,y))_{x,y\in\mathbb{Z}^d}$ be the matrix of transition
intensities of random walk, ${a(x,y)\ge0}$ for $x\ne y$, $a(x,x)<0$, where
$a(x,y)=a(y,x)=a(0,y-x)=a(y-x)$ and $ \sum_{z} a(z)=0$.  Let also $A$ be an
irreducible matrix, that is, for each $z\in\mathbb{Z}^{d}$ there exists a set
of vectors $z_{1},z_{2},\dots,z_{k}\in\mathbb{Z}^{d}$ such that
$z=\sum_{i=1}^{k}z_{i}$ and $a(z_{i})\neq0$ for $i=1,2,\dots,k$.  The
branching mechanism in the sources is independent of the walk and defined by
an infinitesimal generating function $f(u):=\sum_{n=0}^\infty b_n u^n$ where
$b_n\ge0$ for $n\ne 1$, \,$b_1<0$ and $\sum_{n} b_n=0$.  It is assumed that
each of the particles evolves independently of the rest of particles.  We
assume also that there exists the first derivative of the generating function
$\beta_1:=f'(1)<\infty$, that is, the first moment of the direct particle
offsprings is finite, and denote for brevity $\beta:=\beta_1$. For the
further exposition, it suffices only to assume that there exists the first
moment $\beta$.  However, we note that the condition for finiteness of all
moments, that is, $\beta_r:=f^{(r)}(1)<\infty$ for all $r$, is used in the
method-based proofs of the limit theorems on behavior of the numbers of
particles in BRW (see, for example, \cite{YarBRW:e}).

\fussy In the BRW models \cite{Y12_MZ:e}, multipoint perturbations of the
generator of symmetrical random walk $\mathscr{A}$ arise which in the case of
identical intensity of the sources are given by
\begin{equation}\label{E:osnH} \mathscr{H}_{\beta}=\mathscr{A}+
\beta\sum_{i=1}^{N}\varDelta_{x_{i}},
\end{equation}
where $x_{i}\in \mathbb{Z}^{d}$, $\mathscr{A}:l^{p}(\mathbb{Z}^{d})\to
l^{p}(\mathbb{Z}^{d})$, $p\in[1,\infty]$, is a symmetrical operator generated
by the matrix $A$ and obeying the formula
\[
(\mathscr{A}
u)(z):=\sum_{z'\in\mathbb{Z}^{d}}a(z-z')u(z'),
\]
$\varDelta_{x}=\delta_{x}\delta_{x}^{T}$, and $\delta_{x}=\delta_{x}(\cdot)$
denotes the column vector on the lattice assuming unit value at the point $x$
and zero value at the rest of points. The perturbation $\beta
\sum_{i}\varDelta_{x_{i}}$ of the linear operator $\mathscr{A}$ may give rise
to occurrence in the spectrum of the operator $\mathscr{H}_{\beta}$ of
positive eigenvalues, the multiplicity of each such eigenvalue not exceeding
the number of the summands $N$ in the last sum \cite{Y12_MZ:e,Y16-MCAP:e}.

The multipoint perturbations of the generator of symmetrical random walk
$\mathscr{A}$ like \eqref{E:osnH} occur in the operator equations for the
moments of particle numbers.  For example, let $\mu_{t}(y)$ be the number of
particles at the time instant $t$ at the point $y$.  Then, the condition that
at the initial time instant $t=0$ the system consists of a single particle
situated at the point $x$ is equivalent to the equality
$\mu_{0}(y)=\delta(x-y)$.  At that, the total number of particles on the
lattice obeys the equality $\mu_{t}=\sum_{y\in \mathbb{Z}^{d}}\mu_{t}(y)$.
Denote by $m_{1}(t,x,y)=\mathsf{E}_{x}(\mu_{t}(y))$ the expectation of the
number of particles at the time instant $t$ at the point $y$, provided that
$\mu_{0}(y)\equiv \delta(x-y)$, that is, at the initial time instant the
system had one particle at the point $x$.  As was shown in
\cite{Y12_MZ:e,Y13-PS:e}, the evolution of $m_{1}(t,x,y)$ obeys the operator
equation in the space $l^{2}(\mathbb{Z}^{d})$:
\[
\frac{d\,m_{1}(t,x,y)}{d\,t} =(\mathscr{H}_{\beta}m_{1}(t,\cdot,y))(x),
\qquad m_{1}(0,x,y)=\delta(x-y).
\]
Evolution of the mean number of particles
$m_{1}(t,x)=\mathsf{E}_{x}(\mu_{t}(y))$ (total size of the population) over
the entire lattice (see, for example, \cite{YarBRW:e}) satisfies the operator
equation in the corresponding space $l^{\infty}(\mathbb{Z}^{d})$:
\[
\frac{d\,m_{1}(t,x)}{d\,t} =(\mathscr{H}_{\beta}m_{1}(t,\cdot))(x),
\qquad m_{1}(0,x)=1.
\]
Now we notice that the issue of the rate of growth or decrease of the mean
number of particles $m_{1}(t,x,y)$, is tightly bound to the spectral
properties of the operator $\mathscr{H}_{\beta}$.  For example, if the
operator $\mathscr{H}_{\beta}$ has the maximal eigenvalue \mbox{$\lambda>0$,}
then $m_{1}(t,x,y)$ grows at infinity as $e^{\lambda t}$. Denote now by
$p(t,x,y)$ the transition probability of the random walk.  Clearly, the
function $p(t,x,y)$ is determined by the transition intensities $a(x,y)$
(see, for example, \cite{GS:e,YarBRW:e}).  Then the Green function of the
operator $\mathscr{A}$ is representable as the Laplace transform of the
transition probability $p(t,x,y)$:
\begin{equation}\label{E:Gosn}
G_\lambda(x,y):=\int_0^\infty e^{-\lambda t}p(t,x,y)\,dt,\qquad\lambda\geq
0.
\end{equation}
In what follows, the alternative representation of the function
$G_\lambda(x,y)$ will be useful (see, for example, \cite{YarBRW:e}):
\begin{equation}\label{Grin_lambda}
G_\lambda(x,y)=\frac{1}{(2\pi)^d} \int_{ [-\pi,\pi ]^{d}}
\frac{e^{i(\theta, y-x)}} {\lambda-\phi(\theta)}\,d\theta,\qquad
x,y\in\mathbb{Z}^{d},~\lambda \ge 0,
\end{equation}
or, equivalently,
\begin{equation}\label{Grin_lambda_cos}
G_\lambda(x,y)=\frac{1}{(2\pi)^d} \int_{ [-\pi,\pi ]^{d}}
\frac{\cos{(\theta, y-x)}} {\lambda-\phi(\theta)}\,d\theta,\qquad
x,y\in\mathbb{Z}^{d},~\lambda \ge 0,
\end{equation}
For the random walk, the function $G_{0}(x,y)$ has a simple probabilistic
sense of the mean number of hits of the particle at the point $y$ in time
$t\to\infty$ for the process starting from the point $x$.  The asymptotics of
behavior of the mean number of particles $m_{1}(t,x,y)$ for $t\to\infty$ is
also can be expressed in terms of the function $G_\lambda(x,y)$ (see, for
example, \cite{YarBRW:e}).  The same reference shows that analysis of BRW
depends essentially on whether $G_{0}:=G_{0}(0,0)$ is finite or infinite.  If
the condition for finiteness of the variance of jumps
\begin{equation}\label{E:findisp} \sum_{z} |z|^2
a(z)<\infty
\end{equation}
is satisfied (here and below $|z|$ denotes the Euclidean norm of the
vector~$z$), then $G_{0}=\infty$ for $d=1$ or $d=2$ and $G_{0}<\infty$ for
$d\ge 3$ (see, for example, \cite{ABY00:e,YarBRW:e}).  If the asymptotic
equality
\begin{equation}\label{E:infindisp}
a(z)\sim\frac{H\left(\frac{z}{|z|}\right)}{|z|^{d+\alpha}},\qquad\alpha\in(0,2),
\end{equation}
is satisfied for all sufficiently large in norm $z\in\mathbb{Z}^{d}$, where
$H(\cdot)$ is continuous, positive, and symmetrical function on the sphere
$\mathbb{S}^{d-1}:=\{z\in\mathbb{R}^{d} :  |z|=1\}$, then $G_{0}=\infty$ in
the case of $d=1$ and $\alpha \in [1,2)$, and of $d=1$, and $G_{0}$ is finite
in the cases $\alpha \in (0,1)$ or $d\geq 2$ and $\alpha \in (0,2)$
\cite{Y13-CommStat:e}.  In distinction to \eqref{E:findisp}, condition
\eqref{E:infindisp} leads to divergence of the series $\sum_{z} |z|^2 a(z)$
and thereby to infinity of variance of jumps.

We present the necessary information about the discrete spectrum of the
evolutionary operator $\mathscr{H}_{\beta}$.  Denote by
$\beta_{c}:=\beta_{c}(N)$, where $N\geq 1$, the minimal value of source
intensity such that for $\beta>\beta_{c}$ the spectrum of the operator
$\mathscr{H}_{\beta}$ contains the positive eigenvalues.

As was shown in \cite{Y16-MCAP:e}, for the BRW based on symmetrical,
space-homo\-geneous, and irreducible random walk, either of
conditions~\eqref{E:findisp} or~\eqref{E:infindisp} is satisfied.  If
$G_{0}=\infty$, then $\beta_{c}(N)=0$ for $N\geq 1$.  If $G_{0}< \infty$,
then $\beta_{c}(1)=G_{0}^{-1}$ for $N=1$ and $0<\beta_{c}(N)<G_{0}^{-1}$ for
$N\geq 2$.

In the case of $G_{0}< \infty$ for $N=2$, the quantity $\beta_{c}$ was
calculated in \cite{Y12_MZ:e} under condition \eqref{E:findisp}
\begin{equation}\label{E:bcN2}
\beta_{c}=(G_{0}+\widetilde{G}_{0})^{-1},
\end{equation}
where $\widetilde{G}_{0}=G_{0}(x_{1},x_{2})$.  However, we notice that, as
was shown in \cite{Y16-MCAP:e}, condition \eqref{E:findisp} in
\cite{Y12_MZ:e} is nonessential and the equality \eqref{E:bcN2} remains valid
even under condition \eqref{E:infindisp}.

Additional information on the structure of the discrete spectrum of the
operator $\mathscr{H}_{\beta}$ can be extracted from the statement in
\cite{Y15-DAN:e} that for $N\geq 2$ and $\beta>\beta_{c}$ the operator
$\mathscr{H}_{\beta}$ can have at most $N$ positive eigenvalues counting
their multiplicity
\begin{equation}\label{E:eigen}
\lambda_{0}(\beta)>\lambda_{1}(\beta)\geq\cdots\geq\lambda_{N-1}(\beta)>0,
\end{equation}
the leading eigenvalue $\lambda_{0}(\beta)$ being simple, that is, having the
unit multiplicity.  Moreover, there exists a value of $\beta_{c_{1}}>
\beta_{c}$ such that for $\beta\in(\beta_{c},\beta_{c_{1}})$ the operator
$\mathscr{H}_{\beta}$ has a unique eigenvalue $\lambda_{0}(\beta)$.

\section{Weakly Supercritical BRW}\label{S:WSBRW}

The concept of weakly supercritical BRW was introduced in \cite{Y15-DAN:e}.

\begin{definition}\label{D-new-1}
If there exists $\varepsilon_{0}>0$ such that for $\beta\in
(\beta_{c},\beta_{c}+\varepsilon_{0})$ the operator $\mathscr{H}_{\beta}$ has
one (counting multiplicity) positive eigenvalue $\lambda_{0}(\beta)$
satisfying the condition $\lambda_{0}(\beta)\to 0$ for $\beta \downarrow
\beta_{c}$, then the supercritical BRW is called weakly supercritical for
$\beta$ close to $\beta_{c}$.
\end{definition}

As was established in \cite{Y15-DAN:e,Y16-MCAP:e}, for $\beta \downarrow
\beta_{c}$ each supercritical BRW is weakly supercritical.  Of special
interest for the weakly supercritical branching random walks are the
asymptotics of the Green function \eqref{E:Gosn} and the eigenvalue
$\lambda_{0}(\beta)$ for the evolutionary operator \eqref{E:osnH} for
$\beta \downarrow \beta_{c}$, that is, for $\beta\to\beta_{c}$,
$\beta>\beta_{c}$.

As was shown in \cite{YarBRW:e,MolYar12:e}, the following two assertions are
valid under the condition \eqref{E:findisp}.

\begin{theorem}\label{As-G}
If $\lambda\downarrow 0$, then the following asymptotic equalities take
place:
\begin{itemize}
\item[\rm(i)] $G_{\lambda}\sim\gamma_{1}\sqrt{\pi}(\sqrt{\lambda})^{-1}$
    for $d=1$,

\item[\rm(ii)] $G_{\lambda}\sim -\gamma_{2}\ln \lambda$ for $d=2$,

\item[\rm(iii)] $G_{\lambda}- G_{0}\sim
    -2\sqrt{\pi}\gamma_{3}\sqrt{\lambda}$ for $d=3$,

\item[\rm(iv)] $G_{\lambda}-G_{0}\sim\gamma_{4}\lambda\ln {\lambda}$ for
    $d=4$,

\item[\rm(v)] $G_{\lambda}- G_{0}\sim-\gamma_{d}\lambda$ for $d\ge 5$,
\end{itemize}
where $\gamma_{d}$ is some positive constant.
\end{theorem}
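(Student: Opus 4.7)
The plan is to analyze the Fourier representation \eqref{Grin_lambda} at $x=y=0$, namely $G_\lambda=(2\pi)^{-d}\int_{[-\pi,\pi]^d}d\theta/(\lambda-\phi(\theta))$. Writing $\Psi(\theta):=-\phi(\theta)=\sum_z a(z)(1-\cos(\theta\cdot z))\ge 0$, the finite variance condition \eqref{E:findisp} together with the symmetry and irreducibility of the walk ensures that $\Psi$ is $C^2$ on $[-\pi,\pi]^d$ with $\Psi(0)=0$, $\nabla\Psi(0)=0$, and
\[
\Psi(\theta)=\tfrac12\langle\Sigma\theta,\theta\rangle+o(|\theta|^2),\qquad\theta\to 0,
\]
where $\Sigma=(\sum_z a(z)\,z_iz_j)_{i,j}$ is a positive definite matrix. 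In particular there exist $\delta,c_1,c_2,c>0$ such that $c_1|\theta|^2\le\Psi(\theta)\le c_2|\theta|^2$ on $B_\delta:=\{|\theta|<\delta\}$ and $\Psi(\theta)\ge c$ on the complement. Hence the integral over $[-\pi,\pi]^d\setminus B_\delta$ is analytic in $\lambda$ near $0$ and contributes only a constant plus $O(\lambda)$; every singularity comes from $B_\delta$.

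Next I would diagonalize $\Sigma$ by a linear change of variables and pass to polar coordinates, reducing the near-origin calculation, modulo a constant Jacobian, to the model
\[
J_d(\lambda)=\omega_{d-1}\int_0^R\frac{r^{d-1}\,dr}{\lambda+r^2},
\]
where $\omega_{d-1}$ is the surface measure of $\mathbb{S}^{d-1}$. For $d=1$ one has $J_1(\lambda)=2\arctan(R/\sqrt\lambda)/\sqrt\lambda\sim\pi/\sqrt\lambda$, and for $d=2$, $J_2(\lambda)=\pi\ln((\lambda+R^2)/\lambda)\sim-\pi\ln\lambda$, producing (i) and (ii) once the Jacobian and $(2\pi)^{-d}$ are absorbed into $\gamma_1,\gamma_2$. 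For $d\ge 3$ the integrand at $\lambda=0$ is already locally integrable, so I would work instead with the identity
\[
G_\lambda-G_0=-\frac{\lambda}{(2\pi)^d}\int_{[-\pi,\pi]^d}\frac{d\theta}{\Psi(\theta)\,(\lambda+\Psi(\theta))},
\]
whose near-origin reduction is $\omega_{d-1}\int_0^R r^{d-3}(\lambda+r^2)^{-1}dr$. The substitution $r=\sqrt\lambda\,\rho$ rewrites this as $\lambda^{(d-4)/2}\int_0^{R/\sqrt\lambda}\rho^{d-3}(1+\rho^2)^{-1}d\rho$, from which one reads off the three regimes: in $d=3$ the $\rho$-integral tends to $\pi/2$, producing $G_\lambda-G_0\sim -\mathrm{const}\cdot\sqrt\lambda$ (iii); in $d=4$ it grows like $\tfrac12|\ln\lambda|$, yielding $G_\lambda-G_0\sim \mathrm{const}\cdot\lambda\ln\lambda$ (iv); and in $d\ge 5$ the unscaled integral has a finite limit $\int_0^R r^{d-5}dr$ by monotone convergence, giving $G_\lambda-G_0\sim -\gamma_d\lambda$ (v).

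The main technical obstacle is to justify replacing $\Psi$ by the quadratic form $\tfrac12\langle\Sigma\theta,\theta\rangle$ on $B_\delta$ with an asymptotically negligible error. For $d=1,2$ and $d=3$ this is immediate because the $o(|\theta|^2)$ remainder is strictly dominated in the radial integrand by the leading power of $r$. For $d\ge 5$ it follows from dominated convergence applied to the pointwise bound $\lambda/(\Psi(\lambda+\Psi))\le 1/\Psi^2$, which is integrable near zero since $\Psi(\theta)\ge c_1|\theta|^2$ and $d-4>0$. The delicate case is $d=4$: here the leading asymptotics is only logarithmically singular, so one must split $B_\delta$ into $\{|\theta|\le\sqrt\lambda\}$ and $\{\sqrt\lambda<|\theta|<\delta\}$, estimate the contribution of the $o(|\theta|^2)$ correction on each piece separately, and verify that it produces only $o(\lambda|\ln\lambda|)$. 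This two-scale splitting is the key step that controls the error uniformly in $\lambda$.
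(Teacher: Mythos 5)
Your proposal is sound, but it takes a genuinely different route from the paper. The paper does not prove Theorem~\ref{As-G} directly (it cites \cite{YarBRW:e,MolYar12:e} and notes that the proof runs parallel to that of Theorem~\ref{As-G-Ht}); the method there is Tauberian: one starts from the Laplace-transform representation \eqref{E:Gosn}, feeds in the local limit asymptotics $p(t,0,0)\sim h\,t^{-d/2}$, and applies Karamata-type Tauberian theorems (Lemma~\ref{L:Taub}) to $\omega(\lambda)=\int_0^\infty e^{-\lambda t}p(t)\,dt$, together with the identity \eqref{Gl-taub} for the transient dimensions. You instead analyze the Fourier representation \eqref{Grin_lambda} directly, isolating the singularity of $1/(\lambda+\Psi(\theta))$ at $\theta=0$ via the quadratic approximation $\Psi(\theta)\approx\tfrac12\langle\Sigma\theta,\theta\rangle$ and the rescaling $r=\sqrt{\lambda}\,\rho$. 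The trade-off: the Tauberian route requires the local CLT as an external input but then reduces each case to a one-line application of the lemma, and it transfers verbatim to the heavy-tail setting of Theorem~\ref{As-G-Ht}; your route is self-contained (no heat-kernel input) and exhibits the constants in terms of $\det\Sigma$, but shifts all the work into the uniform-in-$\lambda$ error control near $\theta=0$, which you correctly identify as the crux. Two small points there: for $d\ge 5$ you do not actually need the quadratic approximation at all, since dominated convergence with the majorant $\Psi^{-2}$ gives $\gamma_d=(2\pi)^{-d}\int\Psi^{-2}\,d\theta$ directly; and for $d=1,2,3$ the remark that the $o(|\theta|^2)$ remainder is ``dominated by the leading power of $r$'' should be firmed up (e.g.\ by sandwiching $\Psi$ between $(1\pm\varepsilon)\tfrac12\langle\Sigma\theta,\theta\rangle$ on $B_{\delta(\varepsilon)}$ and letting $\varepsilon\to 0$), since for an asymptotic equivalence the constant, not just the order, must converge. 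With these standard completions your argument is correct.
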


\begin{theorem}\label{T-EV}
The eigenvalue $\lambda_{0}(\beta)$ of the operator $\mathscr{H}_{\beta}$
for $\beta \downarrow \beta_{c}$ has the following asymptotic behavior:
\begin{itemize}
\item[\rm(i)] $\lambda_{0}(\beta)\sim c_{1}(N\beta)^{2}$ for $d=1$,

\item[\rm(ii)] $\lambda_{0}(\beta)\sim e^{-c_{2}/(N\beta)}$ for $d=2$,

\item[\rm(iii)] $\lambda_{0}(\beta)\sim c_{3}(\beta-\beta_{c})^{2}$ for
    $d=3$,

\item[\rm(iv)] $\lambda_{0}(\beta)\sim
    c_{4}(\beta-\beta_{c})\ln^{-1}((\beta - \beta_{c}) ^{-1})$ for $d=4$,

\item[\rm(v)] $\lambda_{0}(\beta)\sim c_{d}(\beta-\beta_{c})$ for $d\ge 5$,
\end{itemize}
where $c_{i}$, $i\in \mathbb{N}$, are some positive constants.
\end{theorem}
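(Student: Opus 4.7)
My plan is to reduce the eigenvalue problem for $\mathscr{H}_\beta$ to a finite-dimensional determinantal equation and then apply Theorem~\ref{As-G} together with first-order perturbation theory. Since $\mathscr{H}_\beta-\mathscr{A}=\beta\sum_{i=1}^N \varDelta_{x_i}$ is a rank-$N$ perturbation, a number $\lambda>0$ lies in the point spectrum of $\mathscr{H}_\beta$ precisely when the $N\times N$ matrix $M(\lambda):=\bigl(G_\lambda(x_i,x_j)\bigr)_{i,j=1}^{N}$ has $1/\beta$ as an eigenvalue. Indeed, if $\mathscr{H}_\beta v=\lambda v$ with $v\in l^{2}$, then $v(x)=\beta\sum_i v(x_i)G_\lambda(x,x_i)$, and restricting to $x=x_j$ yields $(I-\beta M(\lambda))\,\mathbf{v}=0$ for $\mathbf v=(v(x_1),\dots,v(x_N))^T$. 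Because $M(\lambda)$ is symmetric with strictly positive entries (by irreducibility), Perron--Frobenius applies and the largest eigenvalue $\mu_0(\lambda)$ is simple with a strictly positive eigenvector; the leading BRW eigenvalue is then the (unique, by monotonicity) solution of
\[
\beta\,\mu_0(\lambda_0(\beta))=1.
\]

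The next step is to analyse $M(\lambda)$ as $\lambda\downarrow 0$. The crucial observation, extracted from the representation \eqref{Grin_lambda_cos}, is that the singular part of $G_\lambda(x_i,x_j)$ comes only from a neighbourhood of $\theta=0$ where $\cos(\theta,x_j-x_i)\to 1$ uniformly in $i,j$. Consequently, for every pair $(i,j)$ the asymptotics of $G_\lambda(x_i,x_j)$ coincide with those of $G_\lambda=G_\lambda(0,0)$ given by Theorem~\ref{As-G} in the divergent cases $d=1,2$, and with those of $G_\lambda(x_i,x_j)-G_0(x_i,x_j)\sim G_\lambda-G_0$ in the regular cases $d\ge 3$. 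The regular (finite, $\lambda$-independent) differences between distinct pairs contribute only lower-order $O(1)$ or $O(\lambda)$ corrections.

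For the recurrent cases $d=1,2$ (so $\beta_c=0$) this gives $M(\lambda)=g(\lambda)\,J+O(1)$, where $J$ is the all-ones $N\times N$ matrix and $g(\lambda)$ denotes $\gamma_1\sqrt{\pi}/\sqrt{\lambda}$ or $-\gamma_2\ln\lambda$ respectively. Since $J$ has top eigenvalue $N$ with eigenvector proportional to $(1,\dots,1)$, we get $\mu_0(\lambda)\sim N g(\lambda)$. Substituting into $\beta N g(\lambda_0(\beta))=1$ and inverting yields (i) $\lambda_0(\beta)\sim \pi\gamma_1^2(N\beta)^2$ and (ii) $\lambda_0(\beta)\sim e^{-1/(\gamma_2 N\beta)}$, which are the announced formulae. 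For the transient cases $d\ge 3$, the matrix $M(0)$ is finite with simple top eigenvalue $\mu_0(0)=\beta_c^{-1}$ and Perron eigenvector $\mathbf e_0$ having strictly positive entries; in particular $S:=(\mathbf e_0^T\mathbf 1)^2>0$. Standard analytic perturbation of a simple eigenvalue gives, to leading order,
\[
\mu_0(\lambda)-\mu_0(0)\;\sim\;\mathbf e_0^{T}\bigl(M(\lambda)-M(0)\bigr)\mathbf e_0
\;\sim\;S\cdot\bigl(G_\lambda-G_0\bigr),
\]
where the uniformity of the singular part across all entries (discussed above) is what allows pulling the common asymptotic factor out. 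Combining with $\mu_0(\lambda_0(\beta))-\mu_0(0)=1/\beta-1/\beta_c=-(\beta-\beta_c)/(\beta\beta_c)$ and inserting the three asymptotics of Theorem~\ref{As-G}(iii)--(v) produces (iii), (iv), (v); in case (iv) one has to invert the implicit relation $\lambda\ln\lambda^{-1}\asymp(\beta-\beta_c)$, which, upon one bootstrap iteration using $\ln\lambda_0^{-1}\sim\ln(\beta-\beta_c)^{-1}$, gives the stated $\lambda_0\sim c_4(\beta-\beta_c)\ln^{-1}((\beta-\beta_c)^{-1})$.

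The main technical point I expect to require care is justifying that the off-diagonal entries $G_\lambda(x_i,x_j)$ with $i\ne j$ share, to leading order, the same singular asymptotic as the diagonal ones, uniformly in $i,j$; this is really a restatement of Theorem~\ref{As-G-Ht} and follows from splitting the integral in \eqref{Grin_lambda_cos} into a neighbourhood of $\theta=0$, where $\cos(\theta,x_j-x_i)=1+O(|\theta|^2)$ and the correction is absorbed into lower-order terms, and its complement, which contributes an analytic-in-$\lambda$ remainder. The rest of the argument is the straightforward combination of a rank-$N$ Krein-type resolvent formula, Perron--Frobenius for $M(0)$, and simple-eigenvalue perturbation theory.
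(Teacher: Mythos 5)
Your proposal is correct and follows essentially the same route as the paper: the paper reduces the problem to the equation $\gamma_0(\lambda)=1/\beta$ for the top (simple, Perron) eigenvalue of the Green-function matrix $\Gamma(\lambda)=(G_\lambda(x_i,x_j))$, uses the convergence of $G_\lambda^{-1}\Gamma(\lambda)$ to the all-ones matrix in the recurrent cases and simple-eigenvalue perturbation of $\Gamma(0)$ in the transient cases, exactly as you do (the paper proves Theorem~\ref{As-L-N} this way and explicitly states that Theorem~\ref{T-EV} is proved along the same lines). The only cosmetic caveat is that for $d\ge 5$ the $O(\lambda)$ coefficients of $G_\lambda(x_i,x_j)-G_0(x_i,x_j)$ genuinely depend on $(i,j)$, so your constant $S=(\mathbf e_0^{T}\mathbf 1)^2$ should be a weighted sum of these pairwise coefficients --- which changes nothing, since the theorem asserts only the existence of some positive $c_d$.
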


The case of infinite variance of the random walk jumps, to which condition
\eqref{E:infindisp} gives rise, is less studied, and the following
Theorems~\ref{As-G-Ht} and~\ref{As-L-N} are devoted to its analysis.  We
notice that Theorem~\ref{T-EV} for $\mathbb{Z}^{d}$ is proved along the same
lines as Theorem~\ref{As-L-N}.  An analog of Theorem~\ref{T-EV} for
$\mathbb{R}^{d}$ can be found in \cite{CKMV09:e}.

\begin{theorem}\label{As-G-Ht}
Let $\alpha\in (0,2)$.  If $\lambda\downarrow 0$, then there are the
following asymptotic equalities:
\begin{itemize}
\item[\rm(i)]
    $G_{\lambda}\sim\gamma_{d,\alpha}\lambda^{\frac{1-\alpha}{\alpha}}$ for
    $d=1$, $\alpha\in(1,2)$,

\item[\rm(ii)] $G_{\lambda}\sim -\gamma_{d,\alpha}\ln \lambda$ for $d=1$,
    $\alpha=1$,

\item[\rm(iii)] $G_{\lambda}-
    G_{0}\sim-\gamma_{d,\alpha}\lambda^{\frac{d-\alpha}{\alpha}}$ for
    $d=1$, $\alpha\in(\frac{1}{2},1)$ or $d=2$, $\alpha\in(1,2)$ or $d=3$,
    $\alpha\in(\frac{3}{2},2)$,

\item[\rm(iv)] $G_{\lambda}-G_{0}\sim\gamma_{d,\alpha}\lambda\ln\lambda$
    for $d=1$, $\alpha=\frac{1}{2}$ or $d=2$, $\alpha=1$ or $d=3$,
    $\alpha=\frac{3}{2}$,

\item[\rm(v)] $G_{\lambda}- G_{0}\sim-\gamma_{d,\alpha}\lambda$ for $d=1$,
    $\alpha\in(0,\frac{1}{2})$ or $d=2$, $\alpha\in(0,1)$ or $d=3$,
    $\alpha\in(0,\frac{3}{2})$ or $d\ge4$, $\alpha\in(0,2)$,
\end{itemize}
where $\gamma_{d,\alpha}$ is some positive constant for each dimension $d$ of
the lattice $\mathbb{Z}^d$.
\end{theorem}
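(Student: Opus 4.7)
The plan is to work directly with the Fourier representation \eqref{Grin_lambda_cos} at $x=y=0$,
\[
G_\lambda=\frac{1}{(2\pi)^d}\int_{[-\pi,\pi]^d}\frac{d\theta}{\lambda-\phi(\theta)},
\]
where $\phi(\theta)=\sum_z a(z)(\cos(\theta,z)-1)$ is real, non-positive, and vanishes only at $\theta=0$ by irreducibility and symmetry of $A$. All behaviour as $\lambda\downarrow 0$ is driven by the singularity at the origin, so the first step is to extract from the heavy-tail hypothesis \eqref{E:infindisp} the local expansion
\[
-\phi(\theta)=h(\theta/|\theta|)\,|\theta|^{\alpha}\bigl(1+o(1)\bigr),\qquad \theta\to 0,
\]
where $h$ is continuous, positive, and symmetric on $\mathbb{S}^{d-1}$ (the standard $\alpha$-stable spectral calculation applied to $H$). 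Splitting $[-\pi,\pi]^d$ into a small ball $B_\delta$ about $0$ and its complement, the integrand on the complement is bounded and smooth in $\lambda$, so this piece contributes a quantity of the form $C_0+O(\lambda)$ that is absorbed into $G_0$ when $G_0<\infty$ and is dominated by the singular part otherwise.

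The asymptotic analysis then reduces to
\[
J(\lambda)=\int_{B_\delta}\frac{d\theta}{\lambda+h(\theta/|\theta|)|\theta|^{\alpha}},
\]
treated directly in cases (i)--(ii) where $G_0=\infty$, and via the identity $G_0-G_\lambda=(2\pi)^{-d}\int \lambda\,|\phi(\theta)|^{-1}(\lambda+|\phi(\theta)|)^{-1}\,d\theta$ in cases (iii)--(v). The decisive step is the substitution $\theta=\lambda^{1/\alpha}u$, which produces the prefactor $\lambda^{d/\alpha-1}$ and an integrand of the form $(1+h(u/|u|)|u|^\alpha)^{-1}$ on the expanding ball $B_{\delta\lambda^{-1/\alpha}}$. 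The five cases then correspond to comparing $d$ with the thresholds $\alpha$ and $2\alpha$: if $d<\alpha$ (case (i)) the rescaled integral converges on $\mathbb{R}^d$ and gives $G_\lambda\sim\gamma_{d,\alpha}\lambda^{(1-\alpha)/\alpha}$; if $d=\alpha$ (case (ii), $d=1$, $\alpha=1$) the rescaled integrand decays only like $|u|^{-1}$ at infinity, producing the $-\gamma_{d,\alpha}\ln\lambda$ asymptotic; if $\alpha<d<2\alpha$ (case (iii)) the same scaling applied to $G_0-G_\lambda$ yields the exponent $(d-\alpha)/\alpha$ with a doubly convergent integral; at $d=2\alpha$ (case (iv)) the rescaled integral has a logarithmic divergence at infinity that inserts the $\ln\lambda$ factor alongside the linear prefactor; and when $d>2\alpha$ (case (v)) the integral $\int|\phi|^{-2}\,d\theta$ converges, so dominated convergence yields $G_0-G_\lambda\sim\lambda\cdot(2\pi)^{-d}\int|\phi|^{-2}\,d\theta$ while the rescaled piece is subleading.

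The principal technical obstacle is quantifying the error in the replacement $-\phi(\theta)\approx h(\theta/|\theta|)|\theta|^{\alpha}$ uniformly enough in $\lambda$ to justify interchanging the limit with the rescaled integrals, especially in the logarithmic boundary cases (ii) and (iv), where the subleading corrections to $\phi$ have the same nominal order as the retained terms and must be separated with care. Away from the origin, the smoothness and strict negativity of $\phi(\theta)$ reduce all remaining bounds to routine estimates, so the heart of the argument is the uniform control of the local expansion of $\phi$ inside $B_\delta$.
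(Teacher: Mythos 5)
Your outline is correct, but it is a genuinely different proof from the one in the paper. The paper works entirely on the time side: it takes as input the heavy-tailed local limit theorem $p(t,0,0)\sim h_{\alpha,d}\,t^{-d/\alpha}$ of Agbor--Molchanov--Vainberg (relation \eqref{p_T}) and converts it into asymptotics of the Laplace transform \eqref{E:Gosn} by the Tauberian Lemma~\ref{L:Taub}, applied to $p(t)$ itself in cases (i)--(ii) and, via the identity \eqref{Gl-taub}, to the tail $\int_t^\infty p(s)\,ds$ in cases (iii)--(v). You instead stay on the Fourier side of \eqref{Grin_lambda_cos}, replace $-\phi(\theta)$ near the origin by $h(\theta/|\theta|)|\theta|^\alpha$, and rescale $\theta=\lambda^{1/\alpha}u$; your identity $G_0-G_\lambda=(2\pi)^{-d}\int\lambda\,|\phi|^{-1}(\lambda+|\phi|)^{-1}d\theta$ is the exact Fourier analogue of \eqref{Gl-taub}, and your thresholds $d<\alpha$, $d=\alpha$, $\alpha<d<2\alpha$, $d=2\alpha$, $d>2\alpha$ reproduce precisely the case split of the theorem. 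What each route buys: the paper's argument is short but leans on a strong external input (the local limit theorem for walks satisfying \eqref{E:infindisp}), and the Tauberian machinery delivers the asymptotics with minimal computation; your argument needs only the symbol asymptotics of $\phi$ --- a more elementary input, from which the local limit theorem itself is usually derived --- and it produces the constants $\gamma_{d,\alpha}$ as explicit convergent integrals, at the price of the uniform error control in the rescaled integrals that you correctly single out as the main technical burden, in particular in the borderline logarithmic cases (ii) and (iv) where the cutoff $|u|\le\delta\lambda^{-1/\alpha}$ supplies the $\ln\lambda$ factor and the $o(1)$ corrections to $\phi$ must be shown not to perturb it. To turn the sketch into a complete proof you would still need to verify the expansion $-\phi(\theta)=h(\theta/|\theta|)|\theta|^{\alpha}(1+o(1))$ from \eqref{E:infindisp} (splitting the defining sum over small and large $|z|$ and checking that the small-$|z|$ part contributes $O(|\theta|^2)=o(|\theta|^{\alpha})$), but this is standard and no obstacle of principle.
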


\begin{theorem}\label{As-L-N}
Let $\alpha\in (0,2)$ and $N\geq 1$.  Then, the following asymptotic
representations are valid for the function $\lambda_{0}(\beta)$ under
$\beta \downarrow \beta_{c}$.
\begin{itemize}
\item[\rm(i)] $\lambda_{0} (\beta)\sim c_{d,
    \alpha}(N\beta)^{\frac{\alpha}{\alpha-1}}$ for $d=1$, $\alpha\in(1,2)$,

\item[\rm(ii)] $\lambda_{0}(\beta)\sim e^{-c_{d,\alpha}/(N\beta)}$ for
    $d=1$, $\alpha=1$,

\item[\rm(iii)] $\lambda_{0}(\beta)\sim
    c_{d,\alpha}(\beta-\beta_{c})^{\frac{\alpha}{d-\alpha}}$ for $d=1$,
    $\alpha\in(\frac{1}{2},1)$ or $d=2$, $\alpha\in(1,2)$ or $d=3$,
    $\alpha\in(\frac{3}{2},2)$,

\item[\rm(iv)] $\lambda_{0}(\beta)\sim e^{W(-c_{d,
    \alpha}(\beta-\beta_{c}))}$ for $d=1$, $\alpha=\frac{1}{2}$ or $d=2$,
    $\alpha=1$ or $d=3$, $\alpha=\frac{3}{2}$,

\item[\rm(v)] $\lambda_{0}(\beta)\sim c_{d, \alpha}(\beta-\beta_{c})$ for
    $d=1$, $\alpha\in(0,\frac{1}{2})$ or  $d=2$, $\alpha\in(0,1)$ or $d=3$,
    $\alpha\in(0,\frac{3}{2})$ or  $d\ge4$, $\alpha\in(0,2)$,
\end{itemize}
where $c_{d, \alpha}$ is some positive constant (for each fixed values of the
parameter $\alpha$ and dimension $d$ of the lattice $\mathbb{Z}^d$), and
$W(x)$ is the lower branch of the Lambert \mbox{$W$-function}\footnote{The
Lambert function $W(z)$ of the complex variable $z$ is determined as solution
of the equation $W e^{W}=z$ (see, for example, \cite{CGHJK:96}).}, see
Fig.~\ref{F-Lambert},
satisfying the condition $W(x)\to -\infty$ for $x\uparrow 0$.
\end{theorem}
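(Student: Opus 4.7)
The plan is to reduce the eigenvalue problem for $\mathscr{H}_\beta$ to an $N\times N$ matrix eigenvalue problem for the Green matrix
\[
\mathcal{G}_\lambda := (G_\lambda(x_i,x_j))_{i,j=1}^{N},
\]
then substitute the asymptotics of Theorem~\ref{As-G-Ht} and invert. Rewriting $\mathscr{H}_\beta\psi=\lambda\psi$ for $\lambda>0$ as $(\lambda I-\mathscr{A})\psi=\beta\sum_{i=1}^{N}\psi(x_i)\delta_{x_i}$ and applying the resolvent gives $\psi(y)=\beta\sum_i\psi(x_i)G_\lambda(y,x_i)$, so evaluating at $y=x_j$ shows that $\lambda>0$ is an eigenvalue of $\mathscr{H}_\beta$ iff $1/\beta$ is an eigenvalue of $\mathcal{G}_\lambda$. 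Since $\mathcal{G}_\lambda$ has strictly positive entries and is symmetric, Perron--Frobenius produces a simple maximal eigenvalue $\mu(\lambda)$, strictly decreasing in $\lambda$, and the leading eigenvalue of $\mathscr{H}_\beta$ is characterized by $\mu(\lambda_0(\beta))=1/\beta$. As $\beta\downarrow\beta_c$ one has $\mu(\lambda_0)\uparrow 1/\beta_c$ and hence $\lambda_0(\beta)\downarrow 0$, consistent with Definition~\ref{D-new-1}.

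The second step is to lift the diagonal asymptotics of Theorem~\ref{As-G-Ht} to the off-diagonal entries of $\mathcal{G}_\lambda$. From \eqref{Grin_lambda_cos},
\[
G_\lambda(x,y)-G_\lambda(0,0)=\frac{1}{(2\pi)^d}\int_{[-\pi,\pi]^{d}}\frac{\cos(\theta,y-x)-1}{\lambda-\phi(\theta)}\,d\theta,
\]
and the factor $\cos(\theta,y-x)-1=O(|\theta|^{2})$ near the origin subdues the small-$\theta$ singularity $1/\phi(\theta)\sim c|\theta|^{-\alpha}$ for every $\alpha\in(0,2)$, so this integral converges absolutely even at $\lambda=0$. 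Hence $G_\lambda(x_i,x_j)=G_\lambda(0,0)+O(1)$ as $\lambda\downarrow 0$. A parallel computation with an extra factor of $\lambda$ bounds $[G_\lambda(x_i,x_j)-G_0(x_i,x_j)]-[G_\lambda-G_0]$ by $O(\lambda)$, which is of lower order than $G_\lambda-G_0$ in cases (iii),(iv) and of equal order only in case (v) (contributing then just to the multiplicative constant). Consequently $\mathcal{G}_\lambda=G_\lambda\,J_{N}+O(1)$ in cases (i),(ii), with $J_{N}$ the all-ones matrix, and $\mathcal{G}_\lambda-\mathcal{G}_0=(G_\lambda-G_0)J_{N}+o(G_\lambda-G_0)$ in cases (iii),(iv).

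The two regimes are now treated separately. When $G_0=\infty$ (cases (i),(ii), $\beta_c=0$), the matrix $G_\lambda J_{N}$ has maximal eigenvalue $N G_\lambda$, so $\mu(\lambda)\sim N G_\lambda$, and the relation $NG_{\lambda_0}\sim 1/\beta$ combined with Theorem~\ref{As-G-Ht}(i),(ii) yields $\lambda_0\sim c(N\beta)^{\alpha/(\alpha-1)}$ and $\lambda_0\sim e^{-c/(N\beta)}$, respectively. When $G_0<\infty$ (cases (iii)--(v), $\beta_c=1/\mu(0)>0$), analytic perturbation theory for the simple Perron eigenvalue gives, with $v_0$ the normalized eigenvector of $\mathcal{G}_0$ and $\kappa=(\mathbf{1}^{\top}v_0)^{2}>0$,
\[
\mu(\lambda)-\mu(0)\sim\kappa(G_\lambda-G_0),
\]
whence $\beta-\beta_c\sim-\beta_c^{2}\kappa(G_{\lambda_0}-G_0)$. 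Substituting Theorem~\ref{As-G-Ht}(iii),(v) and solving for $\lambda_0$ gives the claimed powers $\lambda_0\sim c(\beta-\beta_c)^{\alpha/(d-\alpha)}$ and $\lambda_0\sim c(\beta-\beta_c)$. In the borderline case (iv) the relation becomes $\beta-\beta_c\sim c\lambda_0\ln(1/\lambda_0)$; setting $u=-\ln\lambda_0$ rewrites it as $ue^{-u}=c(\beta-\beta_c)$, i.e.\ $-u=W(-c(\beta-\beta_c))$ on the lower branch (since $\lambda_0\downarrow 0$ forces $u\to+\infty$), so $\lambda_0\sim e^{W(-c(\beta-\beta_c))}$.

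The main technical obstacle is the second step: verifying that the leading-order asymptotic of $G_\lambda(x_i,x_j)$ coincides with that of $G_\lambda(0,0)$ in the divergent cases and, more delicately, that the perturbation $\mathcal{G}_\lambda-\mathcal{G}_0$ is proportional to the fixed rank-one matrix $J_{N}$ up to the order relevant in every parameter range of Theorem~\ref{As-G-Ht}. Once this is in hand, the matrix analysis is a short application of Perron--Frobenius together with first-order perturbation theory, and cases (i),(ii),(iii),(v) reduce to direct inversion of power or logarithmic asymptotics, with case (iv) handled by the Lambert $W$ substitution above.
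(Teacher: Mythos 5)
Your proposal is correct and follows essentially the same route as the paper: reduction to the $N\times N$ Green matrix $\Gamma(\lambda)$ with the characterization $\gamma_{0}(\lambda_{0}(\beta))=1/\beta$, the all-ones rank-one limit giving $\gamma_{0}(\lambda)\sim NG_{\lambda}$ when $G_{0}=\infty$, first-order perturbation of the simple Perron eigenvalue when $G_{0}<\infty$, and the Lambert $W$ inversion in case (iv). The only real divergence is technical: where the paper compares off-diagonal with diagonal entries via Lemma~\ref{L:Ginfty} in cases (i)--(ii) and by re-running the Tauberian argument on $p(t,x_{i},x_{j})$ via \eqref{Gl-taubx} in cases (iii)--(v), you use the direct Fourier estimate based on $\cos(\theta,x_{j}-x_{i})-1=O(|\theta|^{2})$, which is valid in every parameter range and has the side benefit of making explicit that in case (v) the entrywise constants genuinely depend on $(x_{i},x_{j})$ and enter only through the multiplicative constant.
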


\begin{figure}[!htbp]
\begin{center}
\includegraphics[width=0.4\textwidth]{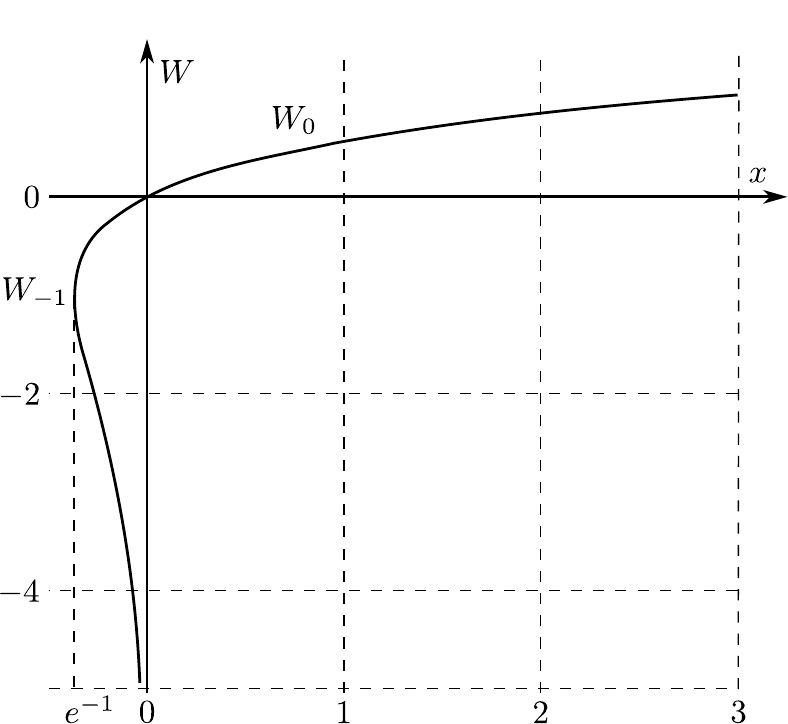}
\caption{Plot of the Lambert $W$-function}\label{F-Lambert}
\end{center}
\end{figure}

\section{Proof of Theorem~\ref{As-G-Ht}}\label{S:As-G-Ht}

Before proving Theorem~\ref{As-G-Ht}, we formulate an auxiliary statement
which is a nonsopisticated consequence of the Tauberian Theorems~2 and~4 from
\cite[vol.~2, Ch.~XIII, \S~5]{Fe:e} and Problem~16 from \cite[vol.~2,
Ch.~XIII, \S 11]{Fe:e}.  We recall that the function $L(t)>0$ determined
under sufficiently large values of $t$ is called the \emph{slowly varying at
infinity} if $\frac {L(tx)}{L(t)}\to1$ for $t\to\infty$ and each fixed $x>0$.

\begin{lemma}\label{L:Taub}
Let $u(t)$ be a positive function summable on $[0,\infty)$, and
\[
\omega(\lambda)=\int_{0}^{\infty}e^{-\lambda t}u(t)\,dt,\qquad \lambda >0
\]
be its Laplace transform.  Let also $L(t)$ be a function slowly varying at
infinity.  Then, the following statements are valid:

\begin{itemize}

\item[\rm(i)] If $0 \le \rho <\infty$, then
\begin{equation}\label{E:Taub}
\omega(\lambda)\sim\frac{1}{\lambda^{\rho}}L(\frac{1}{\lambda})\quad\text{for}\quad \lambda\to 0
\end{equation}
if and only if
\begin{equation}\label{E:Taub1}
\int_{0}^{t}u(s)\,ds\sim\frac{1}{\Gamma(\rho+1)}t^{\rho}L(t)\quad
\text{for}\quad t\to\infty.
\end{equation}

\item[\rm(ii)] If $0<\rho <\infty$ and for the function $u(t)$ there exists
    a function $v(t)$ monotone over some interval $[t_{0},\infty)$ such
    that $u(t)\sim v(t)$ for $t\to\infty$, that is, $u(t)/v(t)\to 1$ for
    $t\to\infty$, then equality \eqref{E:Taub} takes place if and only if
\begin{equation}\label{E:int u}
u(t)\sim\frac{1}{\Gamma(\rho)}t^{\rho-1}L(t)\quad\text{for}\quad
t\to\infty.
\end{equation}
\end{itemize}
\end{lemma}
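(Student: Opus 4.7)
The plan is to deduce Lemma~\ref{L:Taub} directly from the classical Karamata Tauberian theorem for Laplace transforms together with the monotone density theorem, both in the versions stated in Feller, with no additional analytic work beyond checking that the hypotheses match.

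For part~(i), the setup reproduces exactly the assumptions of Karamata's theorem (Feller, Vol.~2, Ch.~XIII, \S5, Theorems~2 and~4): $u\ge 0$ is locally integrable on $[0,\infty)$, its Laplace transform $\omega(\lambda)$ converges for $\lambda>0$, and $L$ is slowly varying at infinity. That theorem asserts, for every $\rho\ge 0$, the equivalence between $\omega(\lambda)\sim\lambda^{-\rho}L(1/\lambda)$ as $\lambda\downarrow 0$ and $\int_{0}^{t}u(s)\,ds\sim\Gamma(\rho+1)^{-1}t^{\rho}L(t)$ as $t\to\infty$. Thus (i) is obtained by a direct transcription, and the slowly varying $L$ being arbitrary is built into the statement.

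For part~(ii), the strategy is to combine (i) with the monotone density theorem (Feller, Vol.~2, Ch.~XIII, \S11, Problem~16). The steps would be: first, apply (i) to obtain $\int_{0}^{t}u(s)\,ds\sim\Gamma(\rho+1)^{-1}t^{\rho}L(t)$; second, use the asymptotic equivalence $u(t)\sim v(t)$ on $[t_{0},\infty)$, together with the divergence of $\int v$ (which is ensured by $\rho>0$ and part (i)), to transfer the integrated asymptotic to $v$, i.e.\ $\int_{t_{0}}^{t}v(s)\,ds\sim\Gamma(\rho+1)^{-1}t^{\rho}L(t)$; third, invoke the monotone density theorem on the monotone function $v$ to conclude $v(t)\sim\Gamma(\rho)^{-1}t^{\rho-1}L(t)$; and finally, return to $u$ via $u\sim v$. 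The reverse implication (from \eqref{E:int u} back to \eqref{E:Taub}) is simply integration of \eqref{E:int u} followed by (i).

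The only subtle point is the second step in (ii): the passage from the integrated asymptotic of $u$ to that of $v$ relies on the fact that $\int_{t_{0}}^{t}u\sim\int_{t_{0}}^{t}v$ whenever $u\sim v$ and the integrals diverge. This is precisely where the restriction $\rho>0$ enters, in contrast to (i) where $\rho=0$ is admissible. I do not anticipate any genuine obstacle here: the lemma is essentially a convenient packaging of Feller's Tauberian results in the form that will later be applied to the integral representations \eqref{Grin_lambda}--\eqref{Grin_lambda_cos} of $G_{\lambda}$ in the proof of Theorem~\ref{As-G-Ht}.
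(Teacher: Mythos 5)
Your proposal is correct and follows essentially the same route as the paper, which itself offers no written proof but simply declares the lemma a direct consequence of Feller's Tauberian Theorems~2 and~4 (Ch.~XIII, \S5) and the monotone density statement of Problem~16 (Ch.~XIII, \S11) --- exactly the ingredients you invoke. Your handling of the only nonroutine point, transferring the integrated asymptotics from $u$ to the monotone comparison function $v$ using the divergence of $\int_{0}^{t}u$ guaranteed by $\rho>0$, is the right way to fill in the detail the paper leaves implicit.
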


We proceed now to proving Theorem~\ref{As-G-Ht}.  It was established in
\cite{AMV:ArXiv14} that for $\alpha\in(0,2)$ and $d\geq1$ and for each
$x,y\in\mathbb{Z}^d$ the asymptotic equality holds
\begin{equation}\label{p_T}
p(t, x, y) \sim \frac{h_{\alpha, d}}{t^{\frac{d}{\alpha}}}\quad\text{for}\quad t\to\infty,
\end{equation}
where $h_{\alpha, d}>0$ is a constant.  From this relation and representation
\eqref{E:Gosn} follows the fact that $G_{0}=\infty$ in the case of $d=1$ and
$\alpha \in [1,2)$, and the variable $G_{0}$ is finite in the cases of $d=1$
and $\alpha \in (0,1)$ or $d\geq 2$ and $\alpha \in (0,2)$.  These relations
for $G_{0}$ may be established without using equality~\eqref{p_T}
(see~\cite{Y13-CommStat:e}).  We begin the proof from the cases (i) and (ii)
where $G_{0}=\infty$.  Denote $p(t):=p(t,0,0)$ and assume that $u(t)\equiv
p(t)$, $v(t)=\frac{h_{\alpha, d}}{t^{\frac{d}{\alpha}}}$.  Since the function
$v(t)$ is monotone, in virtue of \eqref{p_T} the function $u(t)$ is
asymptotically monotone.

\emph{Case \rm{(i)}.} Assume that $\rho=1-\frac{1}{\alpha}$ and
$L(t)=h_{\alpha, 1}$, where by the condition $\alpha\in (1,2)$.  By virtue of
statement~(ii) of Lemma~\ref{L:Taub}, for the function
$G_{\lambda}\equiv\omega (\lambda)$ in this case there exists the asymptotic
equality $G_{\lambda} \sim\gamma_{1,\alpha}\lambda^{\frac{1-\alpha}{\alpha}}$
for $\lambda\to 0$, where $\gamma_{1,\alpha}=h_{\alpha,
1}\Gamma(1-\frac{1}{\alpha})$, which proves the theorem for the case (i).

\emph{Case \rm{(ii)}.} Here $\alpha=1$ and, therefore,
$\rho=1-\frac{1}{\alpha}=0$. Now we make use of statement~(i) of
Lemma~\ref{L:Taub}.  For that we first need to determine the asymptotics of
the integral in the left side of \eqref{E:Taub1}.  By virtue of \eqref{p_T},
the equality $v(t)=h_{1,1}t^{-1}$ is valid and, therefore,
\[
\int_{0}^{t}u(s)\,ds\sim h_{1,1}\ln t \quad\text{for}\quad t\to\infty.
\]
Now by using statement~(i) of Lemma~\ref{L:Taub} we establish that
$G_{\lambda}\sim\gamma_{1,\alpha}\ln \lambda$ for $\lambda\to 0$, where
$\Gamma(1)=1$ and $\gamma_{1,1}=-h_{1, 1}$, which proves the theorem for the
case (ii).

To prove the theorem for the cases (iii)--(v) where $G_{0}<\infty$, we need
an auxiliary relation.  By representing $G_{\lambda}$ as
\begin{multline*}
G_{\lambda}=\int_0^\infty e^{-\lambda t}p(t)\,dt=\lambda
\int_0^\infty e^{-\lambda t}\left(\int_0^t
p(s)\,ds\right)\,dt\\=\lambda \int_0^\infty e^{-\lambda
t}\left(G_{0}-\int_t^\infty
p(s)\,ds\right)\,dt=G_{0}-\lambda \int_0^\infty e^{-\lambda
t}\left(\int_t^\infty p(s)\,ds\right)\,dt,
\end{multline*}
we obtain
\begin{equation}\label{Gl-taub}
G_{0}-G_{\lambda}=\lambda \int_0^\infty e^{-\lambda
t}\left(\int_t^\infty p(s)\,ds\right)\,dt,
\end{equation}
where in virtue of \eqref{p_T} the asymptotic equality
\begin{equation}\label{Gl-taub-2} f(t):=\int_t^\infty p(s)\,ds \sim
\frac{h_{\alpha, d}}{\frac{d}{\alpha}-1}t^{1-\frac{d}{\alpha}}\quad\text{for}\quad
t\to\infty
\end{equation}
takes place.  For the cases (iii) and (iv), we assume that $u(t)\equiv f(t)$
and $v(t)= h_{\alpha,
d}\left(\frac{d}{\alpha}-1\right)^{-1}t^{1-\frac{d}{\alpha}}$.

\emph{Case \rm{(iii)}.} In this case, the estimate $1-\frac{d}{\alpha}>-1$ is
valid under all relevant combinations of the parameters $d$ and $\alpha$, and
in~\eqref{Gl-taub} the outer integral diverges for $\lambda=0$. Additionally,
the inequality $\rho>0$ is satisfied in this case for $\rho$ defined by
$\rho-1=1-\frac{d}{\alpha}$.  Also, $L(t)\equiv \Gamma(\rho) h_{\alpha, d}$.
Consequently, the statement~(ii) of Lemma~\ref{L:Taub} is applicable for
estimation of the asymptotics of the integral in the right side of
\eqref{Gl-taub}.  It follows from this statement that
\[
G_{0}-G_{\lambda}=\lambda \int_0^\infty e^{-\lambda
t}u(t)\,dt \sim
\gamma_{d,\alpha}\lambda^{\frac{d-\alpha}{\alpha}}\quad\text{for}\quad \lambda\to 0,
\]
where $\gamma_{d,\alpha}=\Gamma(\rho) h_{\alpha, d}$, which proves the
theorem for the case (iii).

\emph{Case \rm{(iv)}.} In this case, the equality
$\rho-1=1-\frac{d}{\alpha}=-1$ takes place for all relevant combinations of
the parameters $d$ and $\alpha$, and, correspondingly, $\rho=0$.  Therefore,
we use statement~(i) of Lemma~\ref{L:Taub}.  For that we need to know the
asymptotics of integral \eqref{E:Taub1}.  In virtue of \eqref{Gl-taub-2}, the
function $v(t)$ under all considered conditions is given by $v(t)=h_{\alpha,
d}t^{-1}$, and then
\[
\int_{0}^{t}u(s)\,ds\sim h_{\alpha, d}\ln t\quad\text{for}\quad t\to\infty.
\]
By using the statement~(i) of Lemma~\ref{L:Taub} we obtain that
$G_{0}-G_{\lambda}\sim\gamma_{d,\alpha}\ln \lambda$ for $\lambda\to 0$, where
$\gamma_{d,\alpha}=-\Gamma(1)h_{d, \alpha}$, which proves the theorem for the
case (iv).

\emph{Case \rm{(v)}.} For the outer integral in \eqref{Gl-taub} to be finite
and uniformly bounded under all sufficiently small $\lambda>0$, it suffices
in virtue of the inequality
\[
\int_0^\infty e^{-\lambda
t}\left(\int_t^\infty p(s)\,ds\right)\,dt \leq \int_0^\infty \left(\int_t^\infty p(s)\,ds\right)\,dt,
\]
that the function $f(t)$ as defined in \eqref{Gl-taub-2} be integrable over
$[0,\infty)$.  In its turm, by virtue of \eqref{Gl-taub-2} this takes place
for $1-\frac{d}{\alpha}<-1$ or, which is the same, $d>2\alpha$. Consequently,
under condition (v) the integral in \eqref{Gl-taub} is finite, and for
$\lambda \to 0$ we get
\begin{equation}\label{E:gamma0}
G_{0}-G_{\lambda}=\gamma_{d,\alpha}\lambda, \quad \text{where} \quad
\gamma_{d,\alpha}=\int_0^\infty \left(\int_t^\infty p(s)\,ds\right)\,dt
\end{equation}
which completes the proof of theorem for the case (v).

By using Theorem~\ref{As-G-Ht}, one can obtain asymptotic representations for
the function $\lambda_{0}(\beta)$ for $\beta \downarrow \beta_{c}$ in the
case of an arbitrary fixed number of sources~$N$.

\section{Proof of Theorem~\ref{As-L-N}}\label{S:As-L-N}

Prior to proving the theorem, we prove an auxiliary lemma.

\begin{lemma}\label{L:Ginfty}
Let $\phi(\theta)$, where $\theta \in [-\pi,\pi]^d$ is a continuous
function such that $\phi(\theta)<0$ for $\theta\neq 0$ and $\phi(0)=0$, and
let
\begin{equation*}
\Theta(\lambda)= \int_{[-\pi,\pi]^d} \frac{1}{\lambda
-\phi(\theta)}d\theta \to \infty\quad\text{for}\quad\lambda \downarrow 0.
\end{equation*}
Let additionally $f(\theta)$ and $h(\theta)$ be functions continuous on
$[-\pi,\pi]^d$ and satisfying the conditions $f(0)=0$ and $h(0)=1$.  Then,
for $\lambda \downarrow 0$
\begin{equation}\label{E:2}
\frac{F(\lambda)}{\Theta(\lambda)}\to 0,
\end{equation}
\begin{equation}\label{E:3}
\frac{H(\lambda)}{\Theta(\lambda)}\to 1,
\end{equation}
where
\[
F(\lambda)=\int_{[-\pi,\pi]^d} \frac{f(\theta)}{\lambda - \phi(\theta)}d\theta, \quad
H(\lambda)=\int_{[-\pi,\pi]^d} \frac{h(\theta)}{\lambda - \phi(\theta)}d\theta.
\]
\end{lemma}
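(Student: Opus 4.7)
The plan is to exploit the fact that $\Theta(\lambda)\to\infty$ forces all of the mass of the integral to concentrate near the single point $\theta=0$ where $\phi$ vanishes, and then to use the continuity hypotheses on $f$ and $h$ at that point. First I would observe that, since $\phi(\theta)<0$ for $\theta\ne0$ and $\phi$ is continuous on the compact set $[-\pi,\pi]^d$, for every $\delta>0$ there is a constant $c_\delta>0$ with $-\phi(\theta)\ge c_\delta$ for $|\theta|\ge\delta$. Consequently the ``outer'' part of any integral of the form $\int g(\theta)/(\lambda-\phi(\theta))\,d\theta$ over $\{|\theta|\ge\delta\}$ is bounded by $\|g\|_\infty(2\pi)^d/c_\delta$, uniformly in $\lambda\ge0$.

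Next I would prove \eqref{E:2}. Fix $\varepsilon>0$; by continuity of $f$ and $f(0)=0$, choose $\delta>0$ so that $|f(\theta)|\le\varepsilon$ whenever $|\theta|\le\delta$. Splitting,
\[
|F(\lambda)|\le \varepsilon\int_{|\theta|\le\delta}\frac{d\theta}{\lambda-\phi(\theta)}+\frac{\|f\|_\infty(2\pi)^d}{c_\delta}\le \varepsilon\,\Theta(\lambda)+C_\delta.
\]
Dividing by $\Theta(\lambda)$ and letting $\lambda\downarrow0$, the hypothesis $\Theta(\lambda)\to\infty$ gives $\limsup|F(\lambda)|/\Theta(\lambda)\le\varepsilon$, and since $\varepsilon$ was arbitrary we obtain \eqref{E:2}.

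Finally, \eqref{E:3} follows immediately by writing $h(\theta)=1+(h(\theta)-1)$. The function $\tilde f(\theta):=h(\theta)-1$ is continuous with $\tilde f(0)=0$, so applying \eqref{E:2} to $\tilde f$ yields
\[
\frac{H(\lambda)}{\Theta(\lambda)}=1+\frac{1}{\Theta(\lambda)}\int_{[-\pi,\pi]^d}\frac{h(\theta)-1}{\lambda-\phi(\theta)}\,d\theta\longrightarrow1.
\]

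The only nontrivial step is the splitting argument for \eqref{E:2}: one must verify that the bound $c_\delta$ is independent of $\lambda$, which rests on compactness of $[-\pi,\pi]^d$ and strict negativity of $\phi$ away from the origin. Everything else is routine, and the fact that $\Theta(\lambda)\to\infty$ (given as a hypothesis rather than derived) is what makes the bounded ``outer'' contribution asymptotically negligible.
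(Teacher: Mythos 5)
Your proof is correct and follows essentially the same route as the paper's: split the integral at a small ball around $\theta=0$, bound the inner part by $\varepsilon\,\Theta(\lambda)$ using $|f|\le\varepsilon$ there, bound the outer part by a $\lambda$-independent constant (the paper uses $\mes\{[-\pi,\pi]^d\setminus\mathbb{S}_\rho\}\cdot\max|f/\phi|$ where you use $\|f\|_\infty(2\pi)^d/c_\delta$, which is the same idea), and then deduce \eqref{E:3} from \eqref{E:2} via $h=1+(h-1)$. No gaps.
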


\begin{proof}
Relation~\eqref{E:3} follows from \eqref{E:2} if under the given function
$h(\theta)$ one takes $f(\theta)=h(\theta)-1$ and notes that in this case
$H(\lambda)=F(\lambda)+\Theta(\lambda)$.

We prove relation~\eqref{E:2}.  Take an arbitrary $\varepsilon>0$ and use it
to define $\rho=\rho(\varepsilon)>0$ such that $|f(\theta)|\leq \varepsilon$
for $\theta \in \mathbb{S}_{\rho}:=\{ x:  \|x\| \leq \rho\}$. For the already
obtained $\rho$, denote by $\kappa_{\rho}$ the variable
\[
\kappa_{\rho}:=\mes\left\{ [-\pi,\pi]^d \setminus \mathbb{S}_{\rho}\right\}\cdot\max_{\theta\in [-\pi,\pi]^d \setminus
 \mathbb{S}_{\rho}}\left|\frac{f(\theta)}{\phi(\theta)}\right|.
\]
We note that in virtue of continuity of the functions $\phi(\theta)$ and
$f(\theta)$, as well as of the fact that $\phi(\theta)\neq 0$ for $\theta\neq
0$, the quantity $\kappa_{\rho}$ is finite and, moreover, for $\lambda>0$ the
following inequalities hold:
\begin{equation}\label{E:UNQ}
\left|\int_{[-\pi,\pi]^d \setminus \mathbb{S}_{\rho}} \frac{f(\theta)}{\lambda -
\phi(\theta)}\,d\theta \right|\leq \int_{[-\pi,\pi]^d \setminus \mathbb{S}_{\rho}}
\left|\frac{f(\theta)}{\phi(\theta)}\right|\,d\theta\leq \kappa_{\rho}.
\end{equation}
Estimate now the quantity $\left|\frac{F(\lambda)}{\Theta(\lambda)}\right|$:
\[
\left|\frac{F(\lambda)}{\Theta(\lambda)}\right|=\left|\frac{\int_{[-\pi,
\pi]^d} \frac{f(\theta)}{\lambda -
\phi(\theta)}d\theta}{\Theta(\lambda)}\right|\leq
\frac{\left|\int_{\mathbb{S}_{\rho}} \frac{f(\theta)}{\lambda -
\phi(\theta)}d\theta\right|}{\Theta(\lambda)}+\frac{\left|\int_{[-\pi,\pi]^
d\setminus \mathbb{S}_{\rho}} \frac{f(\theta)}{\lambda -
\phi(\theta)}d\theta\right|}{\Theta(\lambda)}.
\]
For the first summand, we get
\[
\left|\int_{\mathbb{S}_{\rho}}
\frac{f(\theta)}{\lambda -
\phi(\theta)}d\theta\right|\leq \int_{\mathbb{S}_{\rho}}
\frac{|f(\theta)|}{\lambda -
\phi(\theta)}d\theta\leq \varepsilon \int_{\mathbb{S}_{\rho}}
\frac{1}{\lambda -
\phi(\theta)}d\theta \leq \varepsilon\int_{[-\pi,\pi]^d}
\frac{1}{\lambda -
\phi(\theta)}d\theta=\varepsilon \Theta(\lambda)
\]
from which and \eqref{E:UNQ} we conclude that
\[
\left|\frac{F(\lambda)}{\Theta(\lambda)}\right|\leq \varepsilon+\frac{\kappa_{\rho}}{\Theta(\lambda)},
\]
and therefore,
\[
\limsup_{\lambda\downarrow 0}\left|\frac{F(\lambda)}{\Theta(\lambda)}\right|\leq \varepsilon.
\]
Then \eqref{E:2} takes place in virtue of arbitrariness of $\varepsilon>0$.
\end{proof}

Proceed now to proving Theorem~\ref{As-L-N}.  Assume that
\begin{equation}\label{E:G}
\Gamma(\lambda)=\left[\begin{array}{ccc}
G_{\lambda}(x_{1},x_{1})   &\cdots&G_{\lambda}(x_{1},x_{N})   \\
G_{\lambda}(x_{2}, x_{1})  &\cdots&G_{\lambda}(x_{2},x_{N})  \\
\dots &\dots&\dots\\
G_{\lambda}(x_{N},x_{1})  &\cdots  &G_{\lambda}(x_{N},x_{N})\\
\end{array}\right].
\end{equation}
Denote by $\gamma_{0}(\lambda)\ge\gamma_{1}(\lambda)\geq\dots\geq\gamma_{N-
1}(\lambda)$ the eigenvalues of the matrix $\Gamma(\lambda)$ arranged in the
descending order which are determined from the equation
\begin{equation}\label{E:detGamma}
\det\left(\Gamma(\lambda)-\gamma I\right)=0.
\end{equation}
As was shown in \cite[Lemma~5]{Y16-MCAP:e}, between the two first
eigenvalues there indeed exists the strict inequality
$\gamma_{0}(\lambda)>\gamma_{1}(\lambda)$, and therefore,
\[
\gamma_{0}(\lambda)>\gamma_{1}(\lambda)\geq\dots\geq\gamma_{N-1}(\lambda).
\]
As was shown in \cite{Y16-MCAP:e}, in this case for each $\beta$ the leading
eigenvalue $\lambda_{0}(\beta)$ of the operator $\mathscr{H}_{\beta}$ is
found from the equation
\begin{equation}\label{E:gamma}
\gamma_{0}(\lambda)=\frac{1}{\beta},
\end{equation}
which is significant in what follows.

By the definition~\eqref{E:Gosn}, the function $G_{\lambda}$ is monotone in
$\lambda$, and therefore, two cases, $G_{\lambda}\to \infty$ and
$G_{\lambda}\leq C<\infty$ for $\lambda\to 0$, are possible, where $C$ is
some positive constant.

Consider first the case of $G_{\lambda}\to \infty$ for $\lambda\to 0$.  As in
Theorem~\ref{As-G-Ht}, this situation is possible only under conditions (i)
and (ii).  In this case, under fixed $x_{i}$ and $x_{j}$ in virtue of
\eqref{Grin_lambda_cos}
\[
G_\lambda(x_{i},x_{j})=\frac{1}{(2\pi)^d} \int_{ [-\pi,\pi ]^{d}}
\frac{h_{ij}(\theta)} {\lambda-\phi(\theta)}\,d\theta,\qquad
x,y\in\mathbb{Z}^{d},~\lambda \ge 0,
\]
where $h_{ij}(\theta)=\cos{(\theta, x_{j}-x_{i})}$, and since
$h_{ij}(0)=1$, the relation
\begin{equation}\label{E:GxG0}
\frac{G_{\lambda}(x_{i},x_{j})}{G_{\lambda}}\to 1\quad\text{for}\quad \lambda\to 0
\end{equation}
is valid in virtue of Lemma~\ref{L:Ginfty}.

Represent the equality \eqref{E:detGamma} as
\[
\det\left(\Gamma(\lambda)-\gamma I\right)=G^{N}_{\lambda}
\det\left(\frac{1}{G_{\lambda}}\Gamma(\lambda)-\frac{\gamma }{G_{\lambda}}I\right)=0.
\]
Denote by $\tilde{\gamma}_{i}(\lambda)$ the eigenvalues of the matrix
$\frac{1}{G_{\lambda}}\Gamma(\lambda)$.  Then,
\begin{equation}\label{E:eigentg}
\tilde{\gamma}_{i}(\lambda)=\frac{\gamma_{i}(\lambda)}{G_{\lambda}},
\end{equation}
where the quantities $\tilde{\gamma}_{i}(\lambda)$ are determined from the
equation
\[
\det\left(\frac{1}{G_{\lambda}}\Gamma(\lambda)-\tilde{\gamma}_{i}(\lambda)I
\right)=0
\]
In virtue of \eqref{E:G} and~\eqref{E:GxG0},
\begin{equation}\label{E:MU}
\frac{1}{G_{\lambda}}\Gamma(\lambda)\to \tilde{\Gamma}=\left[\begin{array}{ccc}
1  &\cdots&1   \\
1  &\cdots&1  \\
\dots &\dots&\dots\\
1 &\cdots  &1\\
\end{array}\right]\quad\text{for}\quad \lambda\to 0,
\end{equation}
As a simple calculation demonstrates, the matrix $\tilde{\Gamma}$ has one
simple eigenvalue (i.e. of unit multiplicity) equal to $N$ and $N-1$
coinciding zero eigenvalues.  In virtue of the representation \eqref{E:MU},
by the theorem on continuous dependence of the eigenvalues on a
matrix~\cite{Gant:e} we then obtain that
\begin{equation}\label{E:eigenGamma}
\tilde{\gamma}_{0}(\lambda)\to N.
\end{equation}
It follows in this case from~\eqref{E:eigentg} and~\eqref{E:eigenGamma}
that
\[ \gamma_{0}(\lambda)\sim NG_{\lambda}\quad\text{for}\quad \lambda\to 0,
\]
that is, by virtue of \eqref{E:gamma}\, $\lambda=\lambda_{0}(\beta)$ obeys
the equation
\begin{equation}\label{E-Glb}
NG_{\lambda}(1+\varphi_{0}(\lambda))=\frac{1}{\beta},
\end{equation}
where $\varphi_{0}(\lambda)$ is a function satisfying
$\varphi_{0}(\lambda)\to 0$ for $\lambda\to 0$.

\emph{Case \rm{(i)}.} In virtue of assertion (i) of Theorem~\ref{As-G-Ht} the
function $G_{\lambda}$ is representable as
\begin{equation}\label{E:la}
G_{\lambda}=\gamma_{1,\alpha}\lambda^{\frac{1-\alpha}{\alpha}}(1+\varphi
(\lambda)),
\end{equation}
where $\varphi(\lambda)$ is some function for which $\varphi(\lambda)\to 0$
under $\lambda\to 0$.  It follows from that and \eqref{E-Glb} that in this
case the quantity $\lambda=\lambda_{0}(\beta)$ is given by
\[
\gamma_{1,\alpha}\lambda^{\frac{1-\alpha}{\alpha}}\cdot
(1+\varphi(\lambda))(1+\varphi_{0}(\lambda))=\frac{1}{N\beta},
\]
whence it follows that
\[
\lambda_{0}(\beta)=\left(\frac{1}{\gamma_{1,\alpha} N\beta}
\left[\frac{1}{(1+\varphi(\lambda_{0}(\beta)))(1+\varphi_{0}(\lambda_{0}(\beta)))}\right]
\right)^{\frac{\alpha}{1-\alpha}}.
\]
Since $\lambda_{0}(\beta)\to 0$ for $\beta\to 0$, the bracketed expression
tends to 1 under $\beta\to 0$.  By assuming that $c_{1,\alpha}=(\gamma_{1,
\alpha})^{-\frac{\alpha}{1-\alpha}}$, we obtain
\[
\lambda_{0}(\beta)\sim c_{1, \alpha}(N\beta)^{\frac{\alpha}{\alpha-1}}
\]
for $\beta\to 0$, which proves the theorem for the case (i).

\emph{Case \rm{(ii)}.} In virtue of assertion (ii) of Theorem~\ref{As-G-Ht},
the representation
\begin{equation}\label{E:lb}
G_{\lambda}=  -\gamma_{1,1}(\ln \lambda)(1+\varphi(\lambda))
\end{equation}
is valid, where again $\varphi(\lambda)$ is some function satisfying
$\varphi(\lambda)\to 0$ for \mbox{$\lambda\to 0$}.  Then, as in the preceding
case we establish in virtue of \eqref{E-Glb} that the quantity
$\lambda=\lambda_{0}(\beta)$ is given by
\[
-\gamma_{1,1}(\ln \lambda)(1+\varphi(\lambda))(1+\varphi_{0}(\lambda))=
\frac{1}{N\beta},
\]
whence it follows that
\[
\ln\lambda_{0}(\beta)=-\frac{1}{\gamma_{1,1} N\beta}
\left[\frac{1}{(1+\varphi(\lambda_{0}(\beta)))(1+\varphi_{0}(\lambda_{0}(\beta)))}\right].
\]
Again, since $\lambda_{0}(\beta)\to 0$ for $\beta\to 0$, the bracketed
expression tends to $1$ for $\beta\to 0$.  By assuming that
$c_{1,1}=(\gamma_{1,1})^{-1}$, we establish
\[
\lambda_{0}(\beta)\sim
e^{-\frac{c_{1,1}}{N\beta}}
\]
for $\beta\to 0$, which proves the theorem for the case (ii).

Now we go to the proof in the case of
\[
G_{\lambda}\leq C<\infty\quad\text{for}\quad\lambda\to 0.
\]
We notice that this inequality can take place only if conditions (iii)--(v)
of Theorem~\ref{As-G-Ht} are satisfied.  In this case, in virtue of
\eqref{Gl-taub} the following equalities are valid for each pair of
subscripts $i$ and $j$:
\begin{equation}\label{Gl-taubx}
G_{0}(x_{i},x_{j})-G_{\lambda}(x_{i},x_{j})=\lambda \int_0^\infty e^{-\lambda
t}\left(\int_t^\infty p(s,x_{i},x_{j})\,ds\right)\,dt,
\end{equation}
whence, as in the proofs of statements (iii)--(v) of Theorem~\ref{As-G-Ht},
it follows for $\lambda\to 0$ that
\[
G_{0}(x_{i},x_{j})-G_{\lambda}(x_{i},x_{j})\sim \psi_{d,\alpha}(\lambda),
\]
where $\psi_{d,\alpha}(\lambda)$ is one of the functions
$\gamma_{d,\alpha}\lambda^{\frac{d-\alpha}{\alpha}}$ or
$-\gamma_{d,\alpha}\ln \lambda$ or $\gamma_{d,\alpha}\lambda$.  We underline
that here the right side of the asymptotic equality, the function
$\psi_{d,\alpha}(\lambda)$, is independent of $x_{i}$ and $x_{j}$.

In this case, the matrix $\Gamma(\lambda)$ is representable as
\[
\Gamma(\lambda)=\Gamma(0)+\psi_{d,\alpha}(\lambda)Q(\lambda),
\]
where the particular form of the function $\psi_{d,\alpha}(\lambda)$ is
determined by assertions (iii)--(v) of Theorem~\ref{As-G-Ht}, and
$Q(\lambda)$ in the limit is again the matrix consisting only of units.
Then, the equation \eqref{E:detGamma} for $\gamma_{0}(\lambda)$ can be
rearranged in
\[
\det\left(\Gamma(0)+\psi_{d,\alpha}(\lambda)Q(\lambda)-\gamma_{0}(\lambda)I\right)=0.
\]
As can be seen from this equation, for $\lambda\to 0$ not only the relation
$\gamma_{0}(\lambda)\to \gamma_{0}(0)$ takes place, but also, since the
leading eigenvalue of the matrix $\Gamma(0)$ is simple, by the theorem of
smooth dependence of the simple eigenvalues under smooth perturbations of a
matrix~\cite{Kato:e} there exists a number $\kappa>0$ such that
\[
\gamma_{0}(\lambda)=\gamma_{0}(0)+\kappa\psi_{d,\alpha}(\lambda)(1+\tilde{\varphi}(\lambda)),
\]
where $\tilde{\varphi}(\lambda)\to 0$ for $\lambda\to 0$.\footnote{We notice
that, according to the theory of perturbations of linear operators
\cite{Kato:e} the number $\kappa$ is defined by the structure of the matrices
$\Gamma(0)$ and $\Gamma'_{\lambda}(0)$, and therefore, depends in particular
on $d$, $\alpha$ and $N$.}
To determine $\lambda_{0}(\beta)$, with regard for
\[
\gamma_{0}(0)=\frac{1}{\beta_{c}},
\]
we get from \eqref{E:gamma}
\[
\frac{1}{\beta_{c}}+\kappa\psi_{d,\alpha}(\lambda)(1+\tilde{\varphi}(\lambda))=\frac{1}{\beta}
\]
or
\begin{equation}\label{E:kappa}
\kappa\psi_{d,\alpha}(\lambda)(1+\tilde{\varphi}(\lambda))=\frac{1}{\beta}-\frac{1}{\beta_{c}}.
\end{equation}

\emph{Case \rm{(iii)}.} In this case, according to statement (iii) of
Theorem~\ref{As-G-Ht} the function $\psi_{d,\alpha}(\lambda)$ for
$\lambda\to\lambda_{0}$ is given by
\[
\psi_{d,\alpha}(\lambda)= -\gamma_{d,\alpha}\lambda^{\frac{d-\alpha}{\alpha}}(1+\varphi(\lambda)),
\]
where $\varphi(\lambda)$ is some function satisfying $\varphi(\lambda)\to
0$ for $\lambda\to 0$.  Then for $\lambda=\lambda_{0}(\beta)$ one can put
down the equation
\[
-\kappa\gamma_{d,\alpha}\lambda^{\frac{d-\alpha}{\alpha}}(1+\varphi(\lambda))(1+\tilde{\varphi}(\lambda))=\frac{1}{\beta}-\frac{1}{\beta_{c}}.
\] Whence it follows that
\[
(\lambda_{0}(\beta))^{\frac{d-\alpha}{\alpha}}=
\frac{\beta-\beta_{c}}{\kappa\gamma_{d,\alpha}\beta^{2}_{c}}
\left[\frac{\beta_{c}}{\beta}\cdot\frac{1}{\left(1+\varphi(\lambda_{0}(\beta))\right)\cdot \left(1+\tilde{\varphi}(\lambda_{0}(\beta))\right)}\right].
\]
As above, the bracketed expression tends to 1 for $\beta\to\beta_{c}$.
Therefore, by assuming that $c_{d,\alpha}=(\kappa\gamma_{d,\alpha}\beta^{2
}_{c})^{-\frac{\alpha} {d-\alpha}}$ we get
\[
\lambda_{0}(\beta)\sim c_{d,
\alpha}(\beta-\beta_{c})^{\frac{\alpha}{d-\alpha}}\quad\text{for}\quad\beta\to\beta_{c},
\]
which completes the proof of theorem for the case (iii).

\emph{Case \rm{(iv)}.} In virtue of statement (iv) of Theorem~\ref{As-G-Ht},
in this case $\psi_{d,\alpha}(\lambda)$ for $\lambda\to\lambda_{0}$ is given
by
\begin{equation}\label{E:ld}
\psi_{d,\alpha}(\lambda)=\gamma_{d,\alpha}\lambda\ln\lambda(1+\varphi(\lambda)),
\end{equation}
where $\varphi(\lambda)\to 0$ for $\lambda\to 0$.  In virtue of
\eqref{E:kappa} we then get the equality
\[
\kappa\gamma_{d,\alpha}\lambda\ln\lambda(1+\varphi(\lambda))(1+\tilde{\varphi}(\lambda))=\frac{1}{\beta}-\frac{1}{\beta_{c}},
\]
whence it follows that
\[
\lambda_{0}(\beta)\ln\lambda_{0}(\beta)
=
\frac{\beta-\beta_{c}}{-\kappa\gamma_{d,\alpha}\beta^{2}_{c}}
\left[\frac{\beta_{c}}{\beta}\cdot\frac{1}{\left(1+\varphi(\lambda_{0}(\beta))\right)\cdot \left(1+\tilde{\varphi}(\lambda_{0}(\beta))\right)}\right].
\]
Since in this case $\lambda_{0}(\beta)\to 0$ for $\beta\to\beta_{c}$, the
bracketed expression tends to 1 for $\beta\to\beta_{c}$.  Therefore, we get
$\lambda_{0}(\beta)\ln \lambda_{0}(\beta)\sim -c_{d,
\alpha}(\beta-\beta_{c})$, where
$c_{d,\alpha}=(\kappa\gamma_{d,\alpha}\beta^{2}_{c})^{-1}$ or, which is the
same,
\[
\lambda_{0}(\beta)\ln
\lambda_{0}(\beta)= -c_{d,
\alpha}(\beta-\beta_{c})+(\beta-\beta_{c})\tilde{\varphi}(\beta),
\]
where $\bar{\varphi}(\beta)$ is some function such that
$\bar{\varphi}(\beta)\to 0$ for $\beta\to\beta_{c}$.  In this case,
\[
\lambda_{0}(\beta)=e^{W(-c_{d, \alpha}(\beta-\beta_{c})+(\beta-\beta_{c})
\bar{\varphi}(\beta))},
\]
where $W(x)$ is the lower branch of the Lambert $W$-function \cite{CGHJK:96}.
Extract the ``main part'' in the right side of the obtained equality, for
which purpose represent this equality as
\begin{equation}\label{E:eW}
\lambda_{0}(\beta)=e^{W(-c_{d, \alpha}(\beta-\beta_{c}))}e^{\bar{W}(\beta)},
\end{equation}
where
\[
\bar{W}(\beta)=W(-c_{d, \alpha}(\beta-\beta_{c})+(\beta-\beta_{c})
\bar{\varphi}(\beta))-W(-c_{d, \alpha}(\beta-\beta_{c})).
\]
By making use of the fact that the derivative of the Lambert $W$-function
is given by
\[
W'(x)=\frac{W(x)}{x(1+W(x))}, \qquad x\neq 0,
\]
(see~\cite{CGHJK:96}), by the mean-value theorem we get the expression
\[
\bar{W}(\beta)=W'(\theta)\bigl(-c_{d, \alpha}(\beta-\beta_{c})+(\beta-\beta_{c})
\bar{\varphi}(\beta)-(-c_{d, \alpha}(\beta-\beta_{c}))\bigr),
\]
where $\theta$ is some number satisfying the inequality
\[
-c_{d, \alpha}(\beta-\beta_{c})\leq \theta \leq -c_{d, \alpha}(\beta-\beta_{c})+(\beta-\beta_{c})
\bar{\varphi}(\beta).
\]
Then,
\[
\bar{W}(\beta)=\frac{W(\theta)}{(1+W(\theta))}\cdot \frac{(\beta-\beta_{c})
\bar{\varphi}(\beta)}{\theta},
\]
where $\theta \to 0$ for $\beta\to\beta_{c}$.  Therefore, also
$\frac{W(\theta)}{1+W(\theta)}\to 1$ for $\beta\to\beta_{c}$ because
$W(\theta)\to -\infty$ for $\theta \to 0$.  Consequently,
$\tilde{W}(\beta)\to 0$ for $\beta\to \beta_{c}$.

Therefore, in equality \eqref{E:eW} we have $e^{\tilde{W}(\beta)}\to 1$ for
$\beta\to \beta_{c}$ and then, $\lambda_{0}(\beta) \sim e^{W(-c_{d,
\alpha}(\beta-\beta_{c}))}$, which completes the proof of the theorem in the
case~(iv).

\emph{Case \rm{(v)}.} As follows from statement (v) of Theorem~\ref{As-G-Ht},
in this case the function $\psi_{d,\alpha}(\lambda)$ for
$\lambda\to\lambda_{0}$ is given by
\[
\psi_{d,\alpha}(\lambda)= -\gamma_{d,\alpha}\lambda(1+\varphi(\lambda)),
\]
where $\varphi(\lambda)$ is some function satisfying $\varphi(\lambda)\to
0$ for $\lambda\to 0$.  Then, by virtue of \eqref{E:kappa} we get the
equality
\[
-\kappa\gamma_{d,\alpha}\lambda(1+\varphi(\lambda))(1+\tilde{\varphi}(\lambda))=\frac{1}{\beta}-\frac{1}{\beta_{c}},
\]
whence it follows that
\[
\lambda_{0}(\beta)
=
\frac{\beta-\beta_{c}}{\kappa\gamma_{d,\alpha}\beta^{2}_{c}}
\left[\frac{\beta_{c}}{\beta}\cdot\frac{1}{\left(1+\varphi(\lambda_{0}(\beta))\right)\cdot \left(1+\tilde{\varphi}(\lambda_{0}(\beta))\right)}\right].
\]
The bracketed expression here, as above, tends to $1$ for $\beta\to\beta
_{c}$.  Therefore, by assuming that $c_{d,\alpha}=(\kappa\gamma_{d,\alpha}
\beta^{2}_{c})^{-1}$ we obtain that $\lambda_{0}(\beta)\sim \mbox{$c_{d,
\alpha}(\beta-\beta_{c})$}$ for $\beta\to\beta_{c}$, which completes the
proof of case~(v) and also the entire theorem.

\section*{Acknowledgments}

This study has been carried out at Steklov Mathematical Institute of Russian
Academy of Sciences, and was supported by the Russian Science Foundation,
project no.~14-21-00162.

\fussy


\end{document}